\documentclass[twoside,leqno,11pt, A4]{amsart}
\usepackage{amsfonts}
\usepackage{amsmath}
\usepackage{amscd}
\usepackage{amssymb}
\usepackage{amsthm}
\usepackage{amsrefs}
\usepackage{latexsym}
\usepackage{mathrsfs}
\usepackage{bbm}
\usepackage{enumerate}
\usepackage{color}
\begin{document}

\newtheorem{theorem}[subsection]{Theorem}
\newtheorem{proposition}[subsection]{Proposition}
\newtheorem{lemma}[subsection]{Lemma}
\newtheorem{corollary}[subsection]{Corollary}
\newtheorem{conjecture}[subsection]{Conjecture}
\newtheorem{prop}[subsection]{Proposition}
\numberwithin{equation}{section}
\newcommand{\mr}{\ensuremath{\mathbb R}}
\newcommand{\mc}{\ensuremath{\mathbb C}}
\newcommand{\dif}{\mathrm{d}}
\newcommand{\intz}{\mathbb{Z}}
\newcommand{\ratq}{\mathbb{Q}}
\newcommand{\natn}{\mathbb{N}}
\newcommand{\comc}{\mathbb{C}}
\newcommand{\rear}{\mathbb{R}}
\newcommand{\prip}{\mathbb{P}}
\newcommand{\uph}{\mathbb{H}}
\newcommand{\fief}{\mathbb{F}}
\newcommand{\majorarc}{\mathfrak{M}}
\newcommand{\minorarc}{\mathfrak{m}}
\newcommand{\sings}{\mathfrak{S}}
\newcommand{\fA}{\ensuremath{\mathfrak A}}
\newcommand{\mn}{\ensuremath{\mathbb N}}
\newcommand{\mq}{\ensuremath{\mathbb Q}}
\newcommand{\half}{\tfrac{1}{2}}
\newcommand{\f}{f\times \chi}
\newcommand{\summ}{\mathop{{\sum}^{\star}}}
\newcommand{\chiq}{\chi \bmod q}
\newcommand{\chidb}{\chi \bmod db}
\newcommand{\chid}{\chi \bmod d}
\newcommand{\sym}{\text{sym}^2}
\newcommand{\hhalf}{\tfrac{1}{2}}
\newcommand{\sumstar}{\sideset{}{^*}\sum}
\newcommand{\sumprime}{\sideset{}{'}\sum}
\newcommand{\sumprimeprime}{\sideset{}{''}\sum}
\newcommand{\shortmod}{\ensuremath{\negthickspace \negthickspace \negthickspace \pmod}}
\newcommand{\V}{V\left(\frac{nm}{q^2}\right)}
\newcommand{\sumi}{\mathop{{\sum}^{\dagger}}}
\newcommand{\mz}{\ensuremath{\mathbb Z}}
\newcommand{\leg}[2]{\left(\frac{#1}{#2}\right)}
\newcommand{\muK}{\mu_{\omega}}
\newcommand{\seteq}{:=}
\newcommand{\odd}{\mathrm{\ primary}}
\newcommand{\res}{\mathrm{Res}}

\title[Large values of quadratic Hecke $L$-functions with prime-related moduli  of the Gaussian field]{Large values of quadratic Hecke $L$-functions with prime-related moduli  of the Gaussian field}

%%\date{\today}
\author[P. Gao]{Peng Gao}
\address{School of Mathematical Sciences, Beihang University, Beijing 100191, China}
\email{penggao@buaa.edu.cn}

\begin{abstract}
In this paper, we apply the 
resonance  method to exhibit large values at the central point of the family of quadratic Hecke $L$-functions with prime-related moduli of the Gaussian field under the generalized Riemann hypothesis. 
\end{abstract}

\maketitle

\noindent {\bf Mathematics Subject Classification (2010)}: 11L40, 11M06, 11N56 \newline

\noindent {\bf Keywords}: Hecke $L$-functions, large values, resonance method, character sums, Gaussian field

\section{Introduction}

The resonance method was induced by K. Soundararajan in \cite{Sound08} to study extreme values of $L$-functions. It is shown in \cite[Theorem 1, 2]{Sound08} that for 
sufficiently large $T$ and $X$, 
\begin{align*}
%%\label{zetalargevalue}
\max_{T\leq t\leq 2T}|\zeta( \frac 12+it)| & \geq \exp\left((1+o(1))\sqrt{\frac{\log T}{\log_2 T}}\right), \\
\label{Llargevalue}
\max_{\substack{d \text{ fundamental discriminant }\\X<|d|\leq 2X}}|L(\frac 12, \chi_d)| & \geq \exp\left(\left(\frac 1{\sqrt{5}} +o(1)\right)\sqrt{\frac{\log X }{\log_2 X}}\right),
\end{align*}
  where $\zeta(s)$ is the Riemann zeta function, and $L(s, \chi_d)$ is the $L$-function associated to the Kronecker symbol $\chi_d :=\leg {d}{\cdot}$. Here and in what follows, we denote $\log_j$ the $j$-fold iterated logarithm. 
  
   Using the long resonators introduced by C. Aistleitner in \cite{Aistleitner16} that relates the study of large values $\zeta(s)$ to the estimation of certain GCD sums,  A. Bondarenko and K. Seip \cite{BS17} proved that
\[
\max_{0\leq t\leq T}|\zeta(\frac 12+it)|\geq \exp\left((\frac 1{\sqrt{2}}+o(1))\sqrt{\frac{\log T \log_3 T}{\log_2 T}}\right). 
\] 
  The constant $1/\sqrt{2}$ was subsequently improved by the same authors to $1$ in \cite{BS18} and to $\sqrt{2}$ by R. de la Bret\`eche and G. Tenenbaum in \cite{BT19}.

 The long resonator technique of Bondarenko--Seip was further employed by P.  Darbar and G. Maiti in \cite{DM25} to study large values of $L(1/2, \chi_d)$ to show that under the generalized Riemann hypothesis (GRH), for sufficiently large $X$,  
	\[
	\max_{\substack{d \text{ fundamental discriminant }\\X<|d|\le 2X}}|L(1/2, \chi_{d})|\geq \exp\left(\left(\frac 1 2 +o(1) \right)\sqrt{\frac{\log X \log_3 X}{\log_2 X}}\right).
	\]
  The constant $1/2$ was improved to $1$ by Z. Dong, W. Wang, H. Zhang and S. Zhao in \cite{DWZ26}. 

  On the other hand,  M. Fan, S. Hua and S. Xie \cite{FHX26} studied large values for the family of quadratic Dirichlet $L$-
functions with prime moduli to show that for sufficiently large $X$, 
\begin{align*}
%%\label{p}
\max_{\substack{X< p \leq 2X \\ p \text{ prime}, \ p \equiv 1 \shortmod 8}}|L(\frac 12, \chi_p)|\geq \exp\left(\left(\sqrt{\frac 8{45}}+o(1)\right)\sqrt{\frac{\log X }{\log_2 X}}\right).
\end{align*}
  
  In \cite{Gao2026-1}, the author considered the family of quadratic Dirichlet $L$-functions with prime-related moduli to show that under GRH, for sufficiently large $X$, 
\begin{align}
\label{Llowerboundprime}
\begin{split}
  \max_{\substack{X< p \le 2X \\ p \text{ \rm prime }}}\Big|L(\frac 12, \chi_{8p})\Big |\geq \exp\left(\left(\frac 1 2 +o(1) \right)\sqrt{\frac{\log X \log_3 X}{\log_2 X}}\right).
\end{split}
\end{align}	
   The constant $1/2$ was improved to $1$ by Z. Dong, W. Wang, H. Zhang and S. Zhao in \cite{DWZ26-1} under GRH. 

 It is the aim of this paper to employ the ideas in \cites{BS18, Gao2026-1, DWZ26-1} to study large values at the central point of the family of quadratic Hecke $L$-functions of the Gaussian field under GRH. For this, we denote $K=\mq(i)$ the Gaussian field and $\mathcal O_K=\mz[i]$ its ring of integers. We say an element $q \in \mathcal O_K$ is a prime if the ideal generated by $q$ is a prime ideal. From now on, we denote $\chi_c$ for the quadratic symbol $\leg {c}{\cdot}$ defined in Section \ref{sect: Kronecker} for any $c \in \mathcal O_K$. Note that it is also shown there that $\chi_{(1+i)^5c}$ is a Hecke character for any $c \in \mathcal O_K$, and we denote $L(s, \chi_{(1+i)^5d})$ the associated Hecke $L$-function.
 
    Our main result investigates the maximum sizes of $L(s, \chi_{(1+i)^5q})$ for $q$ runs over prime elements in $\mathcal O_K$.  
\begin{theorem}
\label{main theorem 1}
 With the notation as above and assume the truth of GRH. We have for sufficiently large $X$, 
\begin{align}
\label{Llowerbound}
\begin{split}
  \max_{\substack{q \text{ \rm prime }\\X< N(q) \le 2X}}\Big|L(\frac 12, \chi_{(1+i)^5q})\Big |\geq \exp\left(\left(\sqrt{2} +o(1) \right)\sqrt{\frac{\log X \log_3 X}{\log_2 X}}\right).
\end{split}
\end{align}	
\end{theorem}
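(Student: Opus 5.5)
We follow the long-resonator approach of Bondarenko--Seip as adapted to quadratic families in \cites{DM25, DWZ26, DWZ26-1}, run over prime moduli in $\mathcal O_K$. Let $\Phi$ be a smooth nonnegative weight supported on $(1,2)$. The resonator is
\[
R(q)=\sum_{n\in\mathcal F}r(n)\chi_{(1+i)^5q}(n),
\]
where $n$ runs over a set $\mathcal F$ of odd, square-free, primary elements of $\mathcal O_K$. The coefficients $r(n)$ are multiplicative and of Bondarenko--Seip type: $r(\varpi)=\sqrt{L}/(\sqrt{N(\varpi)}\log N(\varpi))$ for primes $\varpi$ with $L^2\le N(\varpi)\le \exp((\log_2 X)^{\gamma})L^2$, and $0$ otherwise. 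Here $L=\sqrt{\log X\log_3X/\log_2X}$, up to a tunable constant. We keep only $n$ whose prime factors satisfy the Bondarenko--Seip restriction on $\Omega$, so that $\#\mathcal F\le X^{\varepsilon}$ while $\sum_{n}r(n)^2$ retains most of its mass. We then compare
\[
S_1=\sum_{q\ \text{prime}}\Lambda(q)|R(q)|^2\Phi\!\left(\frac{N(q)}X\right),\qquad S_2=\sum_{q\ \text{prime}}\Lambda(q)L(\tfrac12,\chi_{(1+i)^5q})|R(q)|^2\Phi\!\left(\frac{N(q)}X\right).
\]
The maximum in \eqref{Llowerbound} is then at least $|S_2|/S_1$.

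For $S_1$ we expand $|R(q)|^2=\sum_{m,n}r(m)r(n)\chi_{(1+i)^5q}(mn)$. Quadratic reciprocity for primary elements, from Section~\ref{sect: Kronecker}, converts $\chi_{(1+i)^5q}(mn)$ into a Hecke character in $q$. The prime ideal theorem for Hecke characters under GRH then shows that only $mn=\square$, i.e.\ $m=n$ since $n$ is square-free, contributes to the main term. Each off-diagonal term has size $O(X^{1/2+\varepsilon})$, so the total error is $\ll X^{1/2+\varepsilon}(\sum r(n))^2$. Since $\#\mathcal F$ is small, this is negligible, and $S_1\sim \hat\Phi(0)X\sum_n r(n)^2$.

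For $S_2$ we do not use an approximate functional equation with its off-diagonal difficulties. Instead, following \cite{DWZ26-1}, we write $L(\frac12,\chi)$ under GRH via a short Dirichlet polynomial: $\log L(\frac12,\chi_{(1+i)^5q})\ge\sum_{N(\varpi)\le x}\chi(\varpi)/\sqrt{N(\varpi)}\cdots-O(\log X/\log x)$. Alternatively, and more convenient for positivity, we compute $\sum_q\Lambda(q)L(\frac12,\chi_{(1+i)^5q})|R(q)|^2$ by expanding $L(\frac12,\cdot)=\sum_k\chi(k)k^{-1/2}V(N(k)/X)$. We apply the same GRH prime-sum orthogonality to the triple $kmn$. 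The main term comes from $kmn=\square$, giving roughly
\[
X\sum_{m,n}\frac{r(m)r(n)}{\sqrt{N(mn/(m,n)^2)}}\ \cdot(\text{Euler factor}).
\]
This is the GCD-type sum of Bondarenko--Seip, and their combinatorial estimate gives
\[
\frac{S_2}{S_1}\ge \exp\bigl((\sqrt2+o(1))L\bigr)
\]
once the constants are optimized as in \cite{BT19, DWZ26-1}. The $\sqrt2$ comes from the Gaussian field having the same prime density as $\mathbb{Q}$, with norms playing the role of $p$. The error terms from non-square $kmn$ are each $O(X^{1/2+\varepsilon})$ and are summed over $N(k)\le X^{1+\varepsilon}$. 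This step is the main obstacle. Direct summation loses $X^{1/2}$ times a resonator length, so we must either shorten the $k$-sum via the GRH bound on $\log L$ (as in \cite{DWZ26-1}, where one instead bounds $\sum\Lambda(q)\log|L|\,|R|^2$ from below) or exploit that $R$ has length $X^{\varepsilon}$.

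Concretely, we use the $\log$ version. Under GRH, $\log|L(\frac12,\chi_{(1+i)^5q})|$ is bounded below by $\mathrm{Re}\sum_{N(\varpi)\le (\log X)^{A}}\chi(\varpi)N(\varpi)^{-1/2}$ minus acceptable terms, using the Soundararajan-type inequality for Hecke $L$-functions. Averaging against $|R(q)|^2$ then involves only short prime sums, where the orthogonality errors are clearly negligible. The main term reduces to $\sum_{\varpi}r(\varpi)N(\varpi)^{-1/2}/(1+r(\varpi)^2)$, which evaluates to $(\sqrt2+o(1))L$ with the Bondarenko--Seip parameter choice. Finally we exponentiate via Jensen's inequality to obtain \eqref{Llowerbound}.
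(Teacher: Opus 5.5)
\textbf{There is a genuine gap: the step you finally rely on is false.} You need a lower bound $\log|L(\tfrac12,\chi_{(1+i)^5q})|\ge \mathrm{Re}\sum_{N(\varpi)\le(\log X)^A}\chi_{(1+i)^5q}(\varpi)N(\varpi)^{-1/2}-(\text{acceptable})$. The Soundararajan-type inequality under GRH is one-sided: it bounds $\log|L(\frac12,\chi)|$ only from \emph{above}. GRH does not exclude $L(\frac12,\chi)=0$, so no pointwise lower bound of this shape can follow from it. Even in the valid direction, the error term $\log X/\log x$ with $x=(\log X)^A$ has size $\log X/(A\log_2X)$, which swamps the target $\sqrt{\log X\log_3X/\log_2X}$. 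So averaging $\log|L|$ against $|R(q)|^2$ and applying Jensen yields nothing.

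The surrounding claims do not hold up either. In the GCD-sum method the gain is at most $\exp\bigl(O(\sqrt{\log\#\mathcal F\cdot\log_3X/\log_2X})\bigr)$, so a resonator with $\#\mathcal F\le X^{\varepsilon}$ can only give a constant $O(\sqrt{\varepsilon})$. With your $r(\varpi)=\sqrt L/(\sqrt{N(\varpi)}\log N(\varpi))$ supported on $N(\varpi)\ge L^2$, one has $\sum_\varpi r(\varpi)N(\varpi)^{-1/2}\ll\sqrt L/\log L=o(L)$, not $(\sqrt2+o(1))L$. Finally, the density of Gaussian primes plays no role in producing a $\sqrt2$.

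\textbf{The paper takes the approximate-functional-equation route you sketched and dropped.} The obstacle you saw comes from using the wrong length. By Lemma \ref{lem:AFE} the weight is $V(N(l)/N(q)^{1/2})$, so the $l$-sum is effectively $N(l)\le X^{1/2+\varepsilon}$, not $N(l)\le X^{1+\varepsilon}$. The paper takes $R_q=\sum_{m\in\mathcal M}\chi_{(1+i)^5q}(m)$ with $|\mathcal M|\le M=X^{1/4-5\varepsilon}$. Lemma \ref{lemma logd} and partial summation then cost $O(X^{1/2+\varepsilon}M^{\varepsilon})$ per triple $(l,m,n)$, hence $O(X^{3/4+\varepsilon}M^2)$ in total, which is below $XM$. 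Positivity of $V$ lets one keep only $l=[m,n]_K/(m,n)_K$, and Rankin's trick removes the cutoff. The final exponent is the GCD-sum constant times $\sqrt{\log M/\log X}\approx\tfrac12$, so the resonator length must be about $X^{1/4}$, not $X^{\varepsilon}$.

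\textbf{A caution before importing the paper's set $\mathcal M$.} The paper credits the passage from $1$ (the constant of \cite{DWZ26-1}, with square-free support) to $\sqrt2$ to its non-square-free set $\mathcal M$. But here $\chi_{(1+i)^5q}(\varpi^2)=1$, so a non-square-free resonator is just a weighted square-free one, and the diagonal in $\mathcal S_2$ is $\{mn=\square\}$, not $\{m=n\}$. Take a typical $m\in\mathcal M$, meaning $\omega_K(\ell)=\omega_K(v)=J_k/2$ in every block, which covers almost all of $\mathcal M$. Such an $m$ has $\prod_k\binom{J_k}{J_k/2}=\exp(\asymp\log M/\log_3M)$ partners $n$ with $mn=\square$; this is the factor $2^{J_k}$ in the proof of Lemma \ref{lemma Msquaresize}. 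By contrast, the trivial Euler product over $N(\varpi)\le y_{\mathcal M}$ gives $\sum_{n\in\mathcal M}N((m,n)_K)/\sqrt{N(m)N(n)}\le\exp((\log M)^{1/2+o(1)})$ for every $m$. Here $S$ is built from $N((m,n)_K)$, since that is what the main term of $\mathcal S_1$ actually produces. Hence the ratio $S(\mathcal M)/\#\{(m,n)\in\mathcal M^2: mn=\square\}$ that the argument controls tends to $0$, and Lemmas \ref{lemma Msquaresize} and \ref{GCD} cannot both hold. With square-free support, your AFE route is the Gaussian analogue of \cite{DWZ26-1}, where this method yields the constant $1$. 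As far as I can see, neither your argument nor the paper's establishes $\sqrt2$.
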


  We note that in \cite{DL26}, Z. Dong and L. Liu studied large values of quadratic Hecke $L$-functions over a general number field. Our result differs from theirs by restricting our family of Hecke $L$-functions to those with prime-related moduli. Moreover, our choice of the resonator is supported on elements in $\mathcal O_K$ that are not necessarily square-free, which allows us to achieve the constant $\sqrt{2}$ in \eqref{Llowerbound}.    We note that the approach in our proof may be carried out to show that the estimation given in \eqref{Llowerboundprime} holds with the constant $1/2$ being replaced by $\sqrt{2}$.

\section{Preliminaries}
\label{sec 2}

\subsection{Quadratic Hecke $L$-functions}
\label{sect: Kronecker}
  Recall that we denote $K=\mq(i)$ the Gaussian field and $\mathcal O_K=\mz[i]$ its ring of integers. 
  For any $c \in K$, we write $N(c)$ for its the norm. 
  We reserve the letter $\varpi$ for a prime in $\mathcal O_K$ throughout the paper.
  
    We say an element $n \in \mathcal O_K$ is odd if the ideal $(n,2)$ generated by $n$ and $2$ equals $\mathcal O_K$. For an odd prime $\varpi \in \mathcal{O}_K$, the quadratic symbol $\leg {\cdot}{\varpi}$ is defined for $a \in
\mathcal{O}_K$, $(a, \varpi)=1$ by $\leg{a}{\varpi} \equiv
a^{(N(\varpi)-1)/2} \pmod{\varpi}$, with $\leg{a}{\varpi} \in \{ \pm 1\}$. When $\varpi | a$, we define
$\leg{a}{\varpi} =0$.  We further define $\leg {\cdot}{c}=1$ for $c \in U_K$, where $U_K=\{ \pm 1, \pm i \}$ denotes the group of units of $\mathcal{O}_K$.  The quadratic character is then extended
to any composite odd $n$ multiplicatively.   

   For any $0 \neq c \in \mathcal O_K$, we follow \cite[Section 3.8]{iwakow} to define a Dirichlet character $\chi \pmod {c}$ to be a homomorphism
\begin{align*}
%%\label{chi}
  \chi: \left (\mathcal{O}_K / (c) \right )^{\times}  \rightarrow S^1 :=\{ z \in \mc :  |z|=1 \}.
\end{align*}
  We say that $\chi$ is primitive $\pmod {c}$ if it does not factor through $\left (\mathcal{O}_K / (c') \right )^{\times}$ for any divisor $c'$ of $c$ with $N(c')<N(c)$.

   If  $\chi(u)=1$ for any $u \in U_K$, then $\chi$ may be regarded as defined on ideals of $\mathcal O_K$ since every ideal is principal. In this case, we say that this $\chi$ is a Hecke character modulo $q$ of trivial infinite type. We also say a Hecke character is primitive if it is primitive as a Dirichlet character. The quadratic symbol $\leg {\cdot}{n}$ defined above is a Dirichlet character modulo $n$. 
As mentioned in the Introduction, it is shown in \cite[Section 2.1]{G&Zhao4} that for any $c \in \mathcal O_K$, the symbol $\chi_{(1+i)^5c}$ is a Hecke character $\pmod {(1+i)^5c}$ of trivial infinite type. Furthermore, when $c$ is square-free, $\chi_{(1+i)^5c}$ is non-principal and primitive, where we say an element $c \in \mathcal O_K$ is square-free if the ideal $(c)$ generated by it is not divisible by the square of any prime ideal. 

  We note the following supplementary law for primary $n=a+bi$ with $a, b \in \mz$ from \cite[Lemma 8.2.1]{BEW}.
\begin{align}
\label{supprule}
  \leg {i}{n}=(-1)^{(1-a)/2}.
\end{align}

  We observe that there exists a primitive quadratic Dirichlet character $\psi_2$ modulo $2$ as $\left (\mathcal{O}_K / (2) \right )^*$ is isomorphic to the cyclic group of order two generated by $i$.  Therefore we have $\psi_2(n)=-1$ for $n \equiv i \pmod 2$. 
  
We say an element $c \in \mathcal O_K$ is primary if $c \equiv 1 \pmod {(1+i)^3}$ and we recall from Lemma 7 of Section 7 in \cite[Chap. 9]{I&R} that that every ideal in $\mathcal{O}_K$ co-prime to $2$ has a unique primary generator congruent to $1$ modulo $(1+i)^3$.  It follows from Lemma
6 on \cite[p.121]{I&R} that an element $n=a+bi \in \mathcal{O}_K$ with $a, b \in \mz$ is primary if and only if $a \equiv 1 \pmod{4}, b \equiv
0 \pmod{4}$ or $a \equiv 3 \pmod{4}, b \equiv 2 \pmod{4}$.  We shall refer to these two cases above as $n$ is of type $1$ and type $2$, respectively. 
We now define for any primary $n \in \mathcal O_K$, 
\begin{align}
\label{psidef}
  \psi_n =
\begin{cases}
  \leg {\cdot}{n}, \quad  \text{if $n$ is of type $1$}, \\
  \psi_2 \cdot \leg {\cdot}{n}, \quad  \text{if $n$ is of type $2$}. 
\end{cases}
\end{align}
It follows from \eqref{supprule} that $\psi_n$ is also a Hecke character of trivial infinite type with modulus $n$ (resp. $2n$) when $n$ is of type $1$ (resp. of type $2$).

\subsection{Approximate functional equations}
\label{sect: FE}
  We note the following approximate functional equation for quadratic Hecke $L$-functions, taken from \cite[Lemma 2.7]{Gao24}. 
\begin{lemma}
\label{lem:AFE}
With the notation as above. We have for any odd, square-free $d \in \mathcal O_K$,  
\begin{align*}
%%\label{fcneqnL}
\begin{split}
 L(\frac 12 , \chi_{(1+i)^5d}) = & 2\sum_{\substack{n \odd }} \frac{\chi_{(1+i)^5d}(n)}{N(n)^{\frac{1}{2}}} V
\left(\frac{ N(n)}{N(d)^{1/2}} \right),
\end{split}
\end{align*}
where 
\begin{align*}
%%\label{eq:Vdef}
 V(x) = \frac{1}{2 \pi i} \int\limits\limits_{(2)}  \left(\frac{2^{5/2}}{\pi}\right)^{s}
\frac {\Gamma(\frac{1}{2}+s)}{\Gamma(\frac{1}{2})} x^{-s} \frac{ds}{s}.
\end{align*}
\end{lemma}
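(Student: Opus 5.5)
The plan is to derive the formula from the functional equation of $L(s,\chi_{(1+i)^5d})$ by the standard contour-shift argument. Since $d$ is odd and square-free, Section \ref{sect: Kronecker} shows that $\chi=\chi_{(1+i)^5d}$ is a primitive, non-principal Hecke character of trivial infinite type. Its modulus is $(1+i)^5d$, so its conductor has norm $2^5N(d)$.

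First I rewrite the Dirichlet series. Every ideal of $\mathcal O_K$ is principal, and $\chi$ vanishes on ideals divisible by $(1+i)$. Each ideal coprime to $2$ has a unique primary generator. Hence
\[
L(s,\chi)=\sum_{n \odd}\frac{\chi(n)}{N(n)^s}.
\]

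Second, I recall Hecke's functional equation for $K=\mq(i)$, where $|D_K|=4$ and there is one complex place. Put
\[
\Lambda(s,\chi)=\bigl(|D_K|\,N((1+i)^5d)\bigr)^{s/2}(2\pi)^{-s}\Gamma(s)L(s,\chi)=\Bigl(\frac{\sqrt{128N(d)}}{2\pi}\Bigr)^{s}\Gamma(s)L(s,\chi).
\]
This function is entire, because $\chi$ is primitive and non-principal, and it satisfies $\Lambda(s,\chi)=\varepsilon(\chi)\Lambda(1-s,\bar\chi)$. Here $\bar\chi=\chi$ since $\chi$ is quadratic.

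The step needing care is showing $\varepsilon(\chi)=1$. For a real character, $\varepsilon(\chi)=\pm1$, and the theorem of Fr\"ohlich--Queyrut (orthogonal characters have root number $1$) gives $\varepsilon=1$. Alternatively, one can evaluate the normalized Gauss sum of $\chi$ directly, using quadratic reciprocity and the supplementary law \eqref{supprule} in $\mathcal O_K$. I would cite this, as in \cite{G&Zhao4}, rather than reprove it.

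Third, I consider
\[
I=\frac{1}{2\pi i}\int_{(2)}\frac{\Lambda(\tfrac12+s,\chi)}{\Lambda(\tfrac12,\chi)}\frac{ds}{s}.
\]
The case $\Lambda(\tfrac12,\chi)=0$ is trivial for the identity, or can be handled by multiplying through instead of dividing. I shift the contour to $\Re s=-2$, which is justified by Stirling's decay of $\Gamma(\tfrac12+s)$ and the polynomial growth of $L$ in vertical strips. The only pole crossed is at $s=0$, with residue $L(\tfrac12,\chi)$. On the new line, substituting $s\to -s$ and applying the functional equation with $\varepsilon=1$ shows that the shifted integral equals $-I$. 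Therefore $L(\tfrac12,\chi)=2I$.

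Finally, I expand $L(\tfrac12+s,\chi)$ on $\Re s=2$ as the absolutely convergent Dirichlet series from the first step and integrate term by term. The ratio of $\Lambda$-factors is
\[
\Bigl(\frac{\sqrt{128N(d)}}{2\pi}\Bigr)^{s}\frac{\Gamma(\frac12+s)}{\Gamma(\frac12)}.
\]
Using $\sqrt{128}/(2\pi)=2^{5/2}/\pi$, this equals
\[
\Bigl(\frac{2^{5/2}}{\pi}\Bigr)^s N(d)^{s/2}\frac{\Gamma(\frac12+s)}{\Gamma(\frac12)}.
\]
Combined with $N(n)^{-s}$, each term becomes
\[
\frac{\chi(n)}{N(n)^{1/2}}\,V\Bigl(\frac{N(n)}{N(d)^{1/2}}\Bigr),
\]
which is exactly the stated formula. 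The main obstacle is the root number computation. Everything else is routine bookkeeping of the gamma factor, where the constant $2^{5/2}/\pi$ comes from $|D_K|=4$ and the factor $(1+i)^5$ of norm $32$.
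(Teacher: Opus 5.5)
The paper gives no proof of this lemma and simply imports it from \cite[Lemma 2.7]{Gao24}. Your argument is the standard derivation behind that citation, and it is correct: Hecke's functional equation for the primitive quadratic character of conductor $(1+i)^5d$ with root number $1$, then the symmetric contour shift, with $\sqrt{4\cdot 32}/(2\pi)=2^{5/2}/\pi$.

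The only slip is in the normalization of $I$. You should divide by the gamma factor $\bigl(\sqrt{128N(d)}/(2\pi)\bigr)^{1/2}\Gamma(\tfrac12)$, not by $\Lambda(\tfrac12,\chi)$. Dividing by $\Lambda(\tfrac12,\chi)$ makes the residue at $s=0$ equal to $1$ rather than $L(\tfrac12,\chi)$, and the case $\Lambda(\tfrac12,\chi)=0$ is then not trivial. The gamma factor is nonzero and is the ratio you actually use in your final step, so no special case arises.
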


  Note that similar to the proof of \cite[Lemma 2.1]{sound1}, one shows that the function $V(x)$ is real-valued, smooth on $(0, +\infty)$, bounded as $x$ approaches $0$ and decays exponentially as $x\to +\infty$. More precisely,  we have for any $\varepsilon>0$, 
\begin{equation} 
\label{2.07}
      V\left (x \right) = 1+O(x^{1/2-\varepsilon}) \; \mbox{for} \; 0<x <1   \quad \mbox{and} \quad V^{(j)}\left (x \right) =O(e^{-x}) \; \mbox{for}
      \; x >0, \; j \geq 0.
\end{equation}

   We also note that
\begin{align}
\label{eq:Vder}
 V'(x) = -\frac{1}{2 \pi i} \int\limits\limits_{(2)}  \left(\frac{2^{5/2}}{\pi}\right)^{s}
\frac {\Gamma(\frac{1}{2}+s)}{\Gamma(\frac{1}{2})} x^{-s-1} ds.
\end{align}   
   Recall that for $\Re(s)>0$,  
\begin{align*}
%%\label{Gamma}
  \Gamma(s)=\int^{\infty}_0e^{-x}x^{s-1}dx.
\end{align*}      
  It follows from the inverse Mellin transformation that for $x>0$, 
\begin{align*}
%%\label{Gammainv}
  e^{-x}=\frac{1}{2 \pi i} \int\limits\limits_{(2)}\Gamma(s)x^{-s}ds. 
\end{align*}
  We deduce from the above that
\begin{align*}
%%\label{Gammainv1}
  x^{1/2}e^{-x}=\frac{1}{2 \pi i} \int\limits\limits_{(2)}\Gamma(s+\frac 12)x^{-s}ds. 
\end{align*} 
  It follows from  \eqref{eq:Vder}, the observation that $\Gamma(1/2)=\sqrt{\pi}$ (see \cite[(C.7)]{MVa1}) and the above that we have $V'(x)<0$ for $x>0$. Note moreover that by \eqref{2.07} we have $\lim_{x \rightarrow \infty}V(x)=0$. Thus we conclude that 
\begin{align}
\label{Vsign}
  V(x) \geq 0, \quad x \geq 0.
\end{align} 

\subsection{Smoothed character sums}
\label{smoothsum}

   We denote $\square$ for a perfect square in $\mathcal O_K$. We define $\delta_{c=\square}=1$ if $c$ is a perfect square and $\delta_{c=\square}=0$ otherwise.  In the remainder of the paper, let $\Phi$ denote a smooth, non-negative function compactly supported on $[1,2]$ satisfying $\Phi(x) =1$ for $x\in [5/4,7/4]$. The Mellin transform of $\Phi(x)$ is wirtten as ${\widehat \Phi}(s)$ so that for any complex number $s$,
\begin{equation*}
%%\label{Phicheck}
{\widehat \Phi}(s) = \int\limits_{0}^{\infty} \Phi(x)x^{s}\frac {\dif x}{x}.
\end{equation*}
   We have the following result on the smoothed quadratic Hecke character sums.
\begin{lemma}
\label{lemma logd}
With the notation as above and assuming the truth of GRH. Let $c \in \mathcal O_K$ be primary and $\Phi(X)$ be a smooth function fitting the above descriptions. Then for any $\varepsilon>0$,
\begin{equation} 
\label{wsum}
 \sum_{q \odd} (\log N(q)) \chi_{(1+i)^5q}(c) \Phi \left( \frac {N(q)}X \right) =  \delta_{c=\square}\widehat{\Phi}(1)X+O \left( X^{1/2+\varepsilon}\log
  (N(c)+2) \right).
 \end{equation}
\end{lemma}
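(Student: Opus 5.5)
The plan is to use quadratic reciprocity to turn $\chi_{(1+i)^5q}(c)$ into a Hecke character evaluated at $q$. Summing that character over primes with the weight $\log N(q)$ then gives a prime number theorem for a Hecke $L$-function, and GRH supplies the error term.

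\textbf{Reduction to a character in $q$.} The sum runs over primary $q$, and the weight $\log N(q)$ is meant to detect primes. Prime powers $\varpi^k$ with $k\ge 2$ contribute at most $O(X^{1/2+\varepsilon})$ in total, so they can be absorbed into the error. (If the sum is literally over primary prime $q$ only, nothing needs to be removed.)

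Fix a primary $c$ and write $c=c_1c_2^2$ with $c_1$ square-free and primary. For $q$ coprime to $c$ we have $\chi_{(1+i)^5q}(c)=\chi_{(1+i)^5q}(c_1)$. If $q$ shares a prime factor with $c$, the term vanishes, and such primes have norm dividing $N(c)$. There are at most $O(\log N(c))$ of them, each with norm at most $2X$, so their total contribution is $O(\log X\log(N(c)+2))$.

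By quadratic reciprocity for primary elements of $\mathcal O_K$, $\leg{q}{c_1}=\leg{c_1}{q}$. Combining this with the supplementary law \eqref{supprule} for $i$ and the corresponding law for $1+i$, one gets
\[
\chi_{(1+i)^5q}(c)=\chi(q)
\]
for primary prime $q\nmid c$. Here $\chi$ is a Hecke character of trivial infinite type, built from $\leg{\cdot}{c_1}$ and a character modulo a power of $(1+i)$ (essentially a $\psi_{c_1}$-type twist, as in \eqref{psidef}). Its modulus divides $16c_1$.

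The key point is that $\chi$ is principal exactly when $c_1=1$, that is, when $c$ is a perfect square. I would have to check this with care: a square $c$ could still leave a nontrivial $2$-adic character $\leg{(1+i)^5}{q}$ acting on $q$. My expectation is that $\chi_{(1+i)^5q}(c)$ is really $\leg{c}{q}$ for $c$ odd, so this causes no trouble, and that primary $c$ means odd here.

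\textbf{Main term and error term.} Write the sum as $\sum_{\mathfrak a}\Lambda(\mathfrak a)\chi(\mathfrak a)\Phi(N\mathfrak a/X)$ up to the errors already discussed; the sum over ideals and the sum over primary generators agree. Mellin inversion gives
\[
\frac{1}{2\pi i}\int_{(2)}\Big(-\frac{L'}{L}(s,\chi)\Big)\widehat\Phi(s)X^s\,ds.
\]
Now shift the contour to $\Re s=1/2+\varepsilon$.
\begin{itemize}
\item If $\chi$ is principal, the pole of $\zeta_K$ at $s=1$ contributes $\widehat\Phi(1)X$. There is an extra factor coming from the fraction of ideals that have a primary generator satisfying the parity conditions; I would check that normalisation against the stated main term $\widehat\Phi(1)X$.
\item Under GRH, on the line $\Re s=1/2+\varepsilon$ we have $\frac{L'}{L}(s,\chi)\ll \log\big(N(c)(|s|+2)\big)$.
\item $\widehat\Phi(s)$ decays rapidly in $|\Im s|$.
\end{itemize}
Together these give the error $O\big(X^{1/2+\varepsilon}\log(N(c)+2)\big)$. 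Equivalently, one could apply the explicit formula and bound the sum over zeros.

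\textbf{Main obstacle.} The step I expect to be delicate is the reciprocity computation: showing that $\chi_{(1+i)^5q}(c)$, as a function of the prime $q$, is a single Hecke character whose conductor is bounded by $O(N(c))$, and that it is principal precisely when $c=\square$. The analytic part is standard once GRH is assumed.
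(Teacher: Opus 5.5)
Your argument is essentially the paper's. You pass from primes to $\Lambda_K$, write the sum as a Mellin integral of $-\frac{L'}{L}(s,\psi_c)\widehat\Phi(s)X^s$, and shift to $\Re(s)=1/2+\varepsilon$. The pole at $s=1$ contributes $\widehat\Phi(1)X$ only when $c=\square$, and under GRH $\frac{L'}{L}\ll\log\big((N(c)+2)(1+|s|)\big)$. The only difference is how imprimitivity is handled. The paper passes to the primitive $\widetilde\psi_c$ and bounds the extra Euler-factor terms by $\sum_{\varpi\mid c}\log N(\varpi)$. You instead remove the square part of $c$ and discard $q\mid c$ by hand. Both work.

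The step you flagged as the main obstacle comes from a misreading of the notation, and it is actually immediate. Since $\chi_d=\leg{d}{\cdot}$, we have
$\chi_{(1+i)^5q}(c)=\leg{(1+i)^5q}{c}=\chi_{(1+i)^5}(c)\leg{q}{c}=\chi_{(1+i)^5}(c)\,\psi_c(q)$
for primary $q$, because $\psi_2(q)=1$. So $q$ already sits on top of a symbol whose modulus $c$ is fixed, and no reciprocity is needed.

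The factor $(1+i)^5$ contributes only the constant sign $\chi_{(1+i)^5}(c)$, not a $2$-adic character in $q$. Your expectation that $\chi_{(1+i)^5q}(c)=\leg{c}{q}$ is therefore right only up to this sign. Likewise, your identity $\chi_{(1+i)^5q}(c)=\chi(q)$ with $\chi$ a Hecke character is false as stated whenever the sign is $-1$; the constant must be carried along.

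When $c=d^2$ is a square, this sign is $\leg{(1+i)^5}{d}^2=1$. Moreover, a primary square is of type $1$, so $\psi_c=\leg{\cdot}{c}$ is principal and no character is left over.

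On normalisation: each odd prime ideal has exactly one primary generator, so no extra factor appears. The residue is exactly $\widehat\Phi(1)X$, and the prime $(1+i)$ contributes only $O(1)$.
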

\begin{proof}
  Our proof is similar to that for the quadratic Dirichlet character case given in \cite[Lemma 2.4]{G&Zhao23-01}. As $\Phi$ is compactly supported, we obtain that
\begin{align}
\label{sumlambda}
\begin{split}
 & \sum_{q \odd} (\log N(q)) \chi_{(1+i)^5q}(c) \Phi \left( \frac {N(q)}X \right) \\
 =& \sum_{n \odd}  \chi_{(1+i)^5n}(c) \Lambda_{K}(n) \Phi \left( \frac {N(n)}X \right)
 +O \left(\sum_{\substack{N(q)^j \leq X^{1+\varepsilon}, \ j \geq 2 }} (\log N(q))\Phi \left( \frac {N(q)^j}X \right)   \right ).
\end{split}
\end{align}

Now by \cite[(5.49)]{iwakow} and \cite[Theorem 5.15]{iwakow}, we have under GRH, 
\begin{align}
\label{ppowerest}
 \sum_{\substack{p^j \leq X^{1+\varepsilon}, \ j \geq 2}}(\log N(q))\Phi \left( \frac {N(q)^j}X \right) \ll
 X^{\varepsilon}\sum_{\substack{N(q) \leq X^{1/2+\varepsilon}}} \log N(q) \ll X^{1/2+\varepsilon}.
\end{align}

   Next, we apply Mellin inversion to see that
\begin{align}
\label{int}
\sum_{n \odd}  \chi_{(1+i)^5n}(c) \Lambda_{K}(n) \Phi \left( \frac {N(n)}X \right)
=& -\frac {\chi_{(1+i)^5}(c)}{2\pi i}\int\limits_{(2)} \frac {L'(s, \psi_c)}{L(s, \psi_c)} \widehat{\Phi}(s)X^s \dif s,
\end{align}
 where we recall here that $\psi_c$ is defined in \eqref{psidef}. 

  Suppose that $\psi_c$ is induced by a primitive Hecke character $\widetilde \psi_c$, then we have
\begin{align*}
 L(s, \psi_c)=L(s, \widetilde \psi_c)\prod_{\varpi | c}(1-\widetilde \psi_c(\varpi)N(\varpi)^{-s}). 
\end{align*}
   It follows that
\begin{align*}
\frac {L'(s, \psi_c)}{L(s, \psi_c)}=\frac {L'(s, \widetilde \psi_c)}{L(s, \widetilde \psi_c)}+\sum_{\substack{ \varpi \odd \\ \varpi | c}}\frac {(\log N(\varpi))\psi_c(\varpi)N(\varpi)^{-s}}{(1-\widetilde \psi_c(\varpi)N(\varpi)^{-s})}. 
\end{align*}  
   
  We substitute the above into \eqref{int} and then evaluate the resulting integral by shifting the line of integration to $\Re(s)=1/2+\varepsilon$ for any $\varepsilon>0$.  There is a pole at $s=1$ with residue
  $-\widehat{\Phi}(1)X$ only if $c$ is a perfect square.  Note from our discussions from Section \ref{sect: Kronecker} that the modulus of $\widetilde \psi_c$ is a divisor of $4c$. Thus by 
  \cite[Theorem 5.17]{iwakow}, we have under GRH for $\Re(s) \geq 1/2+\varepsilon$,
\begin{align}
\label{Lderbound}
  \frac {L'(s, \widetilde \psi_c)}{L(s, \widetilde \psi_c)}  \ll \log \big ((N(c)+2)(1+|s|)\big).
\end{align}
   Moreover, we have for $\Re(s) \geq 1/2+\varepsilon$,
\begin{align}
\label{nonprimsum}
 \sum_{\substack{ \varpi \odd \\ \varpi | c}}\frac {(\log N(\varpi))\psi_c(\varpi)N(\varpi)^{-s}}{(1-\widetilde \psi_c(\varpi)N(\varpi)^{-s})} \ll \sum_{\substack{ \varpi \odd \\ \varpi | c}}\log N(\varpi) \ll \log \big (N(c)+2\big).
\end{align}     
   
  It follows from \eqref{Lderbound} and \eqref{nonprimsum} that the new integration on the line $\Re(s)=1/2+\varepsilon$ can be estimated as $O(X^{1/2+\varepsilon}\log  (N(c)+2))$ by using the rapid decay of $\widehat{\Phi}$ on the vertical line. This implies that
\begin{align*}
%%\label{int}
\sum_{n \odd}  \chi_{(1+i)^5n}(c) \Lambda_{K}(n) \Phi \left( \frac {N(n)}X \right)
=&  \delta_{c=\square}\widehat{\Phi}(1)X+O \left( X^{1/2+\varepsilon}\log
  (N(c)+2) \right).
\end{align*}  
  
  The expression given in \eqref{wsum} now follows from \eqref{sumlambda}, \eqref{ppowerest} and the above. This completes the proof of the lemma. 
\end{proof}

\subsection{Large GCD sums}
\label{gcdsum}
 
   Denote $\mathcal P$ the set of primary primes in $\mathcal O_K$ and let $M$ be a large integer.  Denote \((m,n)_K\) and \([m,n]_K\) the greatest common divisor and the least common multiple of two primary elements \(m, n \in \mathcal O_K\), respectively. Thus, if we write $m=\prod_{\varpi_i \odd}\varpi^{\alpha_i}_i, n=\prod_{\varpi_i \odd}\varpi^{\beta_i}_i$ with $\alpha_i, \beta_i$ being non-negative integers, then we have
\begin{align*}
%%\label{gcd}
  (m,n)_K=\prod_{\varpi_i \odd}\varpi^{\min(\alpha_i,\beta_i)}_i, \quad [m,n]_K=\prod_{\varpi_i \odd}\varpi^{\max(\alpha_i,\beta_i)}_i. 
\end{align*}   
    The above then implies that
\begin{align}
\label{gcdprod}
  (m,n)_K[m,n]_K=mn. 
\end{align}     
   
    In this section, we follow the treatments given in \cite[Section 2.2]{BT19} to construct certain set $\mathcal M$ such that the quantity
\begin{align*}
%%\label{defSa}
 S( \mathcal M):=\sum_{m,n\in\mathcal M} \frac{N((m,n)_K^2)}{\sqrt{N(m)N(n)}}
\end{align*}         
     is large.
     
   For this, we let $u\in (1,e]$, $a\in (1,+\infty)$ and $\gamma\in (0,1)$ be three parameters to be fixed later such that $a\gamma<1/\log u$.  Denote
 \begin{align}
\label{Ikdef}
  I_k:=\Big (u^k\log M\log_2M, u^{k+1}\log M\log_2M\Big]\qquad 1\leqslant k\leqslant (\log_2M)^\gamma. 
\end{align}
  
   For any real $x>0$, denote $\pi_K(x)$ to be the number of prime ideals in $\mathcal O_K$ whose norms do not exceed $x$.  It follows from the prime ideal theorem (see \cite[Theorem 8.9]{MVa1}) that 
\begin{align*}
%%\label{estPk}
 P_k :=|I_k\cap\mathcal P|=\pi_{K}(u^{k+1}\log M\log_2M)-\pi_{K}(u^{k}\log M\log_2M)
\leqslant  u^{k +1} \log M . 
\end{align*}
  Also, we have
$$ P_k = u^{k }(u-1)\log M \Big(1+O\Big(\frac {k+\log_3M}{\log_2M}\Big)\Big).$$
  Let $\lfloor x\rfloor$ denote the largest integer not exceeding $x$ for any real $x$ and let $\omega_{K}(n)$ denote the number of distinct prime factors of $n$ for any $n \in \mathcal O_K$. For $1\leqslant k\leqslant (\log_2M)^\gamma$,  we define 
\begin{align*}
%%\label{defJk}
 J_k:=2\lfloor \frac {a\log M}{2k^2\log_3M} \rfloor, \quad M_k:=\prod_{\substack{\varpi \in I_k \\ \varpi \odd}}\varpi. 
\end{align*} 
 
  We further define the sets 
\begin{align*}
%%\label{defJk}
\mathcal M_k  &:=\Big\{ m  : m=\frac \ell{v}M_k,\,\omega_{K}(\ell)\leqslant \frac {J_k}2,\,\omega_{K}(v)\leqslant \frac {J_k}{2}, \,\ell v\mid M_k\Big\}, \qquad 1\leqslant k\leqslant (\log_2M)^\gamma. 
\end{align*}   
  The set $\mathcal M$ we aim to construct is then given by
\begin{align}
\label{defM}
\mathcal M :=\Big\{ m=\prod_{1\leqslant  k\leqslant (\log_2M)^\gamma} m_k\,:\quad m_k\in \mathcal M_k \quad  1\leqslant k\leqslant (\log_2M)^\gamma\Big\}.
\end{align}   

   The following result estimates the size of $\mathcal M$.  
\begin{lemma}
\label{lemma Msize}
With the notation as above. For  $a<1/(\gamma \log u)$, we have $ |\mathcal M |\leqslant M$. Moreover, if $P_+(n)$ denotes the largest norm of the prime divisors of $n$, we have
$$y_{\mathcal M} :=\max_{m\in\mathcal M}P_+(m)\le (\log M)^{1+o(1)}.$$ 
\end{lemma}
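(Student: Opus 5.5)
Since $\mathcal M$ is a product set over the ranges $1\le k\le (\log_2 M)^\gamma$, we have $|\mathcal M|=\prod_k |\mathcal M_k|$. So the plan is to bound each $|\mathcal M_k|$ separately and then multiply.

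An element of $\mathcal M_k$ is fixed by the pair $(\ell,v)$ of coprime divisors of $M_k$ with $\omega_K(\ell),\omega_K(v)\le J_k/2$. Therefore $|\mathcal M_k|$ is at most the number of ways to choose two subsets of the $P_k$ primes in $I_k$, each of size at most $J_k/2$. Using $\sum_{j\le J}\binom{P}{j}\le (P+1)^J$, we get
\begin{align*}
|\mathcal M_k|\le \Big(\sum_{j\le J_k/2}\binom{P_k}{j}\Big)^2\le (P_k+1)^{J_k}.
\end{align*}
Next insert $P_k\le u^{k+1}\log M$, which for large $M$ gives $\log(P_k+1)\le (k+1)\log u+\log_2 M+O(1)$. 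Together with $J_k\le a\log M/(k^2\log_3 M)$ this yields
\begin{align*}
\log|\mathcal M|\le \sum_{k\le(\log_2M)^\gamma}\frac{a\log M}{k^2\log_3M}\Big(\log_2M+(k+1)\log u+O(1)\Big).
\end{align*}

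We then sum the two pieces. The term $\log_2 M\sum_k k^{-2}$ contributes $O(a\log M\log_2M/\log_3 M)$, which is too large. This is where the estimate needs more care, and I expect it to be the main obstacle: one must exploit the size $J_k$ more precisely rather than the crude bound $(P_k+1)^{J_k}$.

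The refined count uses $\binom{P}{j}\le (eP/j)^j$. Since $J_k\approx a\log M/(k^2\log_3M)$ while $P_k\asymp u^k\log M$, the ratio is $P_k/J_k\asymp u^k k^2\log_3 M/a$. Hence $\log\binom{P_k}{J_k/2}\le \frac{J_k}{2}\big(k\log u+2\log k+\log_4 M+O(1)\big)$, and
\begin{align*}
\log|\mathcal M_k|\le J_k\big(k\log u+O(\log k+\log_4M)\big).
\end{align*}
Summing over $k\le(\log_2M)^\gamma$, the main term is
\begin{align*}
\frac{a\log u\log M}{\log_3M}\sum_{k\le(\log_2M)^\gamma}\frac1k\sim \frac{a\gamma\log u\,\log M\log_3M}{\log_3M}=a\gamma\log u\,\log M .
\end{align*}
The error terms are $O(a\log M\log_4M/\log_3M)=o(\log M)$. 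Since $a\gamma\log u<1$, for $M$ large we obtain $\log|\mathcal M|\le \log M$, that is, $|\mathcal M|\le M$.

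For the second claim, every prime dividing some $m\in\mathcal M$ lies in some $I_k$ with $k\le(\log_2M)^\gamma$. Its norm is therefore at most
\begin{align*}
u^{(\log_2M)^\gamma+1}\log M\log_2M\le \exp\big(2(\log_2M)^\gamma\big)\log M\log_2M=(\log M)^{1+o(1)},
\end{align*}
because $\log u\le 1$ and $\gamma<1$ give $(\log_2M)^\gamma=o(\log_2M)$.
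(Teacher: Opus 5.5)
Your proof is correct and is essentially the argument of \cite[Lemme 2.2]{BT19}, which the paper cites in place of a proof. You bound $|\mathcal M_k|$ via $\binom{P}{j}\le (eP/j)^j$, obtain $\log|\mathcal M|\le (a\gamma\log u+o(1))\log M$ from the harmonic sum over $k\le(\log_2M)^\gamma$, and read off $y_{\mathcal M}$ directly from the ranges $I_k$. The only step left implicit is the sum over $j\le J_k/2$; it is harmless, since $\sum_{j\le J}\binom{P}{j}\le (eP/J)^J$ for $1\le J\le P$, and here $2\le J_k\le 2P_k$ once $M$ is large.
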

\begin{proof}
  The proof of the first assertion of the lemma is similar to that of \cite[Lemme 2.2]{BT19}, so we omit it here. The second assertion of the lemma follows straightforwardly from the construction of $\mathcal M$. 
\end{proof} 

  Our next result estimates the number of perfect squares whose divisors are in $\mathcal M$.
\begin{lemma}
\label{lemma Msquaresize}
 With the notation as above. We have
\begin{align*}
%%\label{Msquaresize}
  \sum_{\substack{m, n \in \mathcal{M} \\ m n =\square}}1 \ll M. 
\end{align*} 
\end{lemma}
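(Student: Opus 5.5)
The plan is to show that the condition $mn=\square$ forces $n$ to be essentially determined by $m$, and then to use $|\mathcal M|\le M$ from Lemma \ref{lemma Msize}.

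First reduce to a single block. Write $m=\prod_k m_k$ and $n=\prod_k n_k$ with $m_k,n_k\in\mathcal M_k$. Every prime divisor of $m_k$ or $n_k$ lies in $I_k$, and the intervals $I_k$ are disjoint. Hence $mn$ is a perfect square (up to a unit, since all elements are primary) if and only if each $m_kn_k$ is a square. Moreover, the factorization $m=\prod_k m_k$ is unique, because $m_k$ is recovered as the $I_k$-part of $m$. It therefore suffices to bound, for each fixed $m_k\in\mathcal M_k$, the number of $n_k\in\mathcal M_k$ with $m_kn_k=\square$.

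Now analyse one block. Write $m_k=\ell M_k/v$ and $n_k=\ell'M_k/v'$, with $\ell v\mid M_k$ and $\ell'v'\mid M_k$, and all of these squarefree. The exponent of a prime $\varpi\mid M_k$ in $m_k$ is
\[
\begin{cases}
2 & \text{if } \varpi\mid\ell,\\
0 & \text{if } \varpi\mid v,\\
1 & \text{otherwise,}
\end{cases}
\]
and the same holds for $n_k$ with $\ell',v'$. The exponent in $m_kn_k$ is even exactly when the parities agree. Parity $1$ occurs precisely on the primes not dividing $\ell v$, so $m_kn_k=\square$ is equivalent to $\ell v=\ell'v'$.

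So, given $m_k$, the admissible $n_k$ correspond to ways of splitting the squarefree element $\ell v$ as $\ell'v'$ with $\omega_K(\ell'),\omega_K(v')\le J_k/2$. Their number is at most $2^{\omega_K(\ell v)}\le 2^{J_k}$. This gives
\[
\sum_{\substack{m, n \in \mathcal{M} \\ m n =\square}}1\le |\mathcal M|\prod_k 2^{J_k}\le M\exp\Big(\log 2\sum_k J_k\Big),
\]
where $\sum_k J_k\ll a\log M/\log_3 M$.

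This is the main obstacle: the factor above is $M^{o(1)}$, not $O(1)$, so the naive count gives only $\ll M^{1+o(1)}$. To recover $\ll M$, I would not bound the pairs as $|\mathcal M|$ times a multiplicity. Instead I would count them directly as the number of triples $(\ell v,\ell',v')$ per block. Grouping by the squarefree $d=\ell v\mid M_k$ with $\omega_K(d)=j\le J_k$, the pair count in block $k$ is
\[
\sum_{j\le J_k}\binom{P_k}{j}\Big(\#\{\text{splittings of } d \text{ into } (\ell,v)\}\Big)^2 .
\]
I would then compare this with the count giving $|\mathcal M_k|$, as in the proof of \cite[Lemme 2.2]{BT19}. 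Since $J_k\ll P_k/(k^2\log_2 M\log_3M)$, the binomial sum is dominated by its top term $j=J_k$. The squared splitting count is $\binom{J_k}{J_k/2}^2\le 4^{J_k}$, which has the same $e^{O(J_k)}$ size as the losses already absorbed by the slack in $a<1/(\gamma\log u)$.

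Concretely, I would rerun the Bretèche–Tenenbaum estimate $\prod_k\binom{P_k}{J_k}4^{J_k}\le M$, possibly after shrinking $a$ slightly while keeping $a\gamma<1/\log u$. That inequality is exactly what "the proof is similar to \cite[Lemme 2.2]{BT19}" supplies, and it gives the claimed bound $\ll M$.
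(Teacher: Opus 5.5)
Your proposal is correct and follows essentially the paper's route. It uses the same reduction to blocks and the same per-block observation that $m_kn_k=\square$ forces $\ell'v'=\ell v$, which leaves at most $2^{J_k}$ choices of $n_k$ for each $m_k$. It then absorbs $\prod_k 2^{J_k}$ via the Bret\`eche--Tenenbaum size computation.

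The paper bounds the count by $\prod_k 2^{J_k}|\mathcal M_k|$ and invokes that computation directly. This works because the computation gives $|\mathcal M|\le M^{a\gamma\log u+o(1)}$ with $a\gamma\log u<1$ strict, while $\sum_k J_k=o(\log M)$. So the $M^{o(1)}$ factor you flag as the main obstacle is already absorbed, and neither your recount via triples nor the option of shrinking $a$ is needed.
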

\begin{proof}
  We write $m=\prod_{1\leqslant  k\leqslant (\log_2M)^\gamma} m_k, n=\prod_{1\leqslant  k\leqslant (\log_2M)^\gamma} n_k$ with $m_k, n_k \in \mathcal M_k$ for $1\leqslant  k\leqslant (\log_2M)^\gamma$. Then we have $mn=\square$ if and only if $m_kn_k=\square$ for all $k$. We write $m_k=\frac {\ell_1}{v_1}M_k, n_k=\frac {\ell_2}{v_2}M_k$ and we note that we have $(\ell_i, v_i)=1, i=1,2$. It follows that, for fixed $l_1, v_1$, we have  $\ell_2=(\ell_1, \ell_2)_K \cdot v_1/(v_1, v_2)_K, v_2=(v_1, v_2)_K \cdot \ell_1/(\ell_1, \ell_2)_K$. Thus, $l_2, v_2$ are determined completely by $(\ell_1, \ell_2)$ and $(v_1, v_2)$, which are divisors of $\ell_1$ and $v_1$, respectively. As $\ell_1, v_1$ are square-free, it follows that for any fixed $m_k$, there are at most $2^{\omega_K(\ell_1)+\omega_K(v_1)}$ different choices of $n_k$. As $\omega_{K}(\ell_1),\omega_{K}(v_1)\leqslant \frac {J_k}{2}$, we see that the number of different $n_k$ is $\leq 2^{J_k}$ for any fixed $m_k$. We thus conclude that
\begin{align*}
%%\label{Msquaresize1}
  \sum_{\substack{m, n \in \mathcal{M} \\ m n =\square}}1 \ll \prod_{1\leqslant k\leqslant (\log_2M)^\gamma} 2^{J_k}|M_k|. 
\end{align*}   
   We then argue as in the proof of \cite[Lemme 2.2]{BT19} to see that the right-hand side expression above is $\ll M$. This completes the proof of the lemma. 
\end{proof}   

   Our last result in this section is an analogue to the lower bound given in \cite[Section 2.3]{BT19} for the quantity constructed in \cite[(2.6)]{BT19}, so we omit its proof here. 
\begin{lemma}
\label{GCD}
    With the notation as above. By choosing $u$ and $a\gamma\log u$ close to $1$, we have as  $M\to\infty$, 
   $$S( \mathcal M) \geq M\exp\bigg((2+o(1))\sqrt{\frac{\log M\log_3M}{\log_2M}}\bigg).$$
\end{lemma}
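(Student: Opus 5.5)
The plan is to follow de la Bret\`eche--Tenenbaum \cite[Section 2.3]{BT19}. Since the factors $M_k$ for different $k$ are supported on disjoint sets of primes, the sum $S(\mathcal M)$ factors over $k$. Write $g(m,n)=N((m,n)_K^2)/\sqrt{N(m)N(n)}$. For $m=\prod m_k$ and $n=\prod n_k$ we have $g(m,n)=\prod_k g(m_k,n_k)$, so
\[
S(\mathcal M)=\prod_{1\le k\le (\log_2M)^\gamma} S(\mathcal M_k).
\]
It therefore suffices to bound each $S(\mathcal M_k)$ from below.

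Fix $m_k=\frac{\ell}{v}M_k$. I restrict $n_k$ to the elements $\frac{\ell'}{v'}M_k$ with $v'=v$ and $\ell'$ coprime to $\ell v$, with $\omega_K(\ell')\le J_k/2$. Then $(m_k,n_k)_K=M_k/v$ and $m_kn_k=\ell\ell'M_k^2/v^2$, so
\[
g(m_k,n_k)=\frac{1}{\sqrt{N(\ell\ell')}} .
\]
Summing over such $\ell'$, and symmetrically over $\ell$ with $v$ fixed, gives
\[
S(\mathcal M_k)\ge \sum_{v}\Big(\sum_{\ell}N(\ell)^{-1/2}\Big)^2,
\]
where $\ell$, $v$ range over squarefree divisors of $M_k$ with disjoint support and at most $J_k/2$ prime factors. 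Coprimality between $\ell$ and $\ell'$ only costs a $1+o(1)$ factor, because $J_k\ll P_k^{1/2}$. I compare this with $|\mathcal M_k|\approx \sum_v\sum_\ell 1\approx\binom{P_k}{J_k/2}^2$. Primes in $I_k$ have norm about $u^k\log M\log_2M$, so each factor of $\ell$ contributes $(u^k\log M\log_2M)^{-1/2}$ up to $u^{O(1)}$.

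The ratio $S(\mathcal M_k)/|\mathcal M_k|$ is then of size
\[
\binom{P_k}{J_k/2}\bigl(u^{k}\log M\log_2 M\bigr)^{-J_k/4}\approx \Big(\frac{2eP_k}{J_k\sqrt{u^k\log M\log_2M}}\Big)^{J_k/2}.
\]
Using $P_k\approx (u-1)u^k\log M$ and $J_k\approx a\log M/(k^2\log_3M)$, the logarithm of this factor is roughly
\[
\frac{J_k}{2}\cdot\frac12\log\Big(\frac{u^k\log M}{\log_2M}\Big)\approx \frac{a\log M}{4k^2\log_3M}\log_2M .
\]
This is off by a normalization from the target. The correct bookkeeping follows \cite{BT19}, where $M$ is tied to the resonator length. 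Summing over $k$ with $\sum k^{-2}$ truncated at $(\log_2 M)^\gamma$, and optimizing $a,\gamma,u$ under $a\gamma\log u<1$, yields $\exp((2+o(1))\sqrt{\log M\log_3M/\log_2M})$. The remaining task is to reproduce their computation verbatim with $\pi_K$ in place of $\pi$; the prime ideal theorem gives the same density of primes in $I_k$.

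Finally, I must compare with $M$ rather than with $|\mathcal M|$. Since Lemma \ref{lemma Msize} only gives $|\mathcal M|\le M$, I also need the matching lower bound $|\mathcal M|\ge M\exp(-o(\sqrt{\log M\log_3M/\log_2M}))$ when $a\gamma\log u\to1$. This follows from the same Stirling estimate for $\prod_k\binom{P_k}{J_k/2}^2$ that proves the upper bound.

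I expect the main obstacle to be the precise asymptotic bookkeeping: extracting exactly the constant $2$ from Stirling's formula and the sum over $k$, with the error terms from $P_k$ and the coprimality restriction all shown to be $o(\cdot)$ in the exponent. I would handle this by following the \cite{BT19} computation line by line.
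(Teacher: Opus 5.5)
\paragraph{There is a genuine gap: the family of pairs you keep is far too small.}
Your factorization $S(\mathcal M)=\prod_k S(\mathcal M_k)$ is fine. Note that you are really using $g(m,n)=N((m,n)_K)/\sqrt{N(m)N(n)}=N([m,n]_K/(m,n)_K)^{-1/2}$, which is what the resonance argument produces; the square inside $N((m,n)_K^2)$ is a typo in the definition.

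With $v'=v$ and $(\ell',\ell v)_K=1$ you get $g(m_k,n_k)=N(\ell\ell')^{-1/2}$. Summing gives $\sum_v\bigl(\sum_\ell N(\ell)^{-1/2}\bigr)^2\le \#\{v\}\,e^{2y_k}$, where $y_k=\sum_{\varpi\in I_k}N(\varpi)^{-1/2}\ll u^{k/2}\sqrt{\log M/\log_2 M}$. This is $|\mathcal M_k|^{1/2+o(1)}$, far below even the diagonal contribution $|\mathcal M_k|$ from $n_k=m_k$. Concretely, for a typical $m_k$ (with $\omega_K(\ell)=J_k/2$) the factor $N(\ell)^{-1/2}$ alone is $\exp\bigl(-(\tfrac a4+o(1))\tfrac{\log M\log_2M}{k^2\log_3M}\bigr)$.

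Your evaluation went wrong at $\frac{2eP_k}{J_k\sqrt{u^k\log M\log_2M}}$. This quantity is $\asymp \frac{k^2u^{k/2}\log_3 M}{\sqrt{\log M\log_2 M}}\to 0$, so its logarithm is large and \emph{negative}. Two further slips:
\begin{itemize}
\item the ratio should carry $(u^k\log M\log_2M)^{-J_k/2}$, not ${}^{-J_k/4}$;
\item $J_k\ll P_k^{1/2}$ is false, since $J_k\asymp \log M/(k^2\log_3M)$ while $P_k^{1/2}\asymp u^{k/2}\sqrt{\log M}$.
\end{itemize}
A positive exponent of order $\log M\log_2M/\log_3M$ would in any case contradict $S(\mathcal M)\le|\mathcal M|^2$. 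So this is not a normalization issue. Deferring ``the correct bookkeeping'' to \cite{BT19} does not help, because that argument rests on a different family of pairs.

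\paragraph{The missing idea: the mass of $S(\mathcal M_k)$ sits on near-diagonal pairs.}
Keep only $m_k=\frac{\ell}{v}M_k$ with $\omega_K(\ell)=\omega_K(v)=J_k/2$. These form a proportion $1-O(J_k/P_k)$ of $\mathcal M_k$, whose product over $k$ is $1+o(1)$. For each such $m_k$ take $n_k=\frac{\ell b/a}{v d/c}M_k$, where $a\mid\ell$, $c\mid v$, and $b,d$ are coprime to $\ell v$ and to each other, all four having exactly $r_k$ prime factors. Then:
\begin{itemize}
\item $n_k\in\mathcal M_k$ and $g(m_k,n_k)=N(abcd)^{-1/2}\ge(u^{k+1}\log M\log_2M)^{-2r_k}$;
\item the number of choices is $\binom{J_k/2}{r_k}^2\binom{P_k-J_k}{r_k}\binom{P_k-J_k-r_k}{r_k}$;
\item hence $\sum_{n_k}g(m_k,n_k)\ge e^{-o(\sqrt{x_k})}\bigl(x_k^{r_k}/(r_k!)^2\bigr)^2$ with $x_k=\frac{J_kP_k}{2u^{k+1}\log M\log_2M}$.
\end{itemize}
Choosing $r_k\approx\sqrt{x_k}\asymp\frac1k\sqrt{\log M/(\log_2M\log_3M)}$ gives $e^{(4+o(1))\sqrt{x_k}}$. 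Then $\sum_{k\le(\log_2M)^\gamma}4\sqrt{x_k}=(2\sqrt2+o(1))\sqrt{a\gamma^2(u-1)/u}\,\sqrt{\log M\log_3M/\log_2M}$. Its constant tends to $2\sqrt2$ as $u\to1$, $a\gamma\log u\to1$, $\gamma\to1$, so it exceeds $2$ for suitable parameters.

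\paragraph{Your final step (comparing with $M$) also fails as stated.}
Stirling only gives $\log|\mathcal M|=(a\gamma\log u+o(1))\log M$, so for fixed admissible parameters $|\mathcal M|\le M^{1-\delta}$. Moreover, for every $m$,
\[
\sum_{n\in\mathcal M}g(m,n)\le\prod_k\prod_{\varpi\in I_k}\bigl(1+2N(\varpi)^{-1/2}\bigr)=\exp\bigl((\log M)^{1/2+o(1)}\bigr),
\]
so with such parameters one even has $S(\mathcal M)=o(M)$. What this argument, like \cite{BT19}, delivers is a bound for $S(\mathcal M)/|\mathcal M|$. Replacing $|\mathcal M|$ by $M$ would require tuning $a$ with $M$ so that $|\mathcal M|=M\exp\bigl(-o(\sqrt{\log M\log_3M/\log_2M})\bigr)$, and no Stirling estimate provides that.
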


\section{Proof of Theorem \ref{main theorem 1}}

  It suffices to establish Theorem \ref{main theorem 1} by restricting $q$ on primary primes. Let \(\mathcal M\) be the set given in \eqref{defM}. We define the resonator for $L(\frac 1 2, \chi_{(1+i)^5q})$ for any primary prime $q \in \mathcal O_K$ to be the Dirichlet polynomial 
\[
R_q:= \sum_{m\in \mathcal{M}}\chi_{(1+i)^5q}(m).
\] 
 
   Let $\Phi$ be the function  described in Section \ref{smoothsum}. We set 
\begin{align*} 
 \mathcal{S}_{1}:=\sum_{q \odd} (\log N(q))L(\frac{1}{2},\chi_{(1+i)^5q}) R_q^2 \Phi (\frac {N(q)}{X}), \quad \mathcal{S}_{2}:=\sum_{q \odd }(\log N(q))R_q^2\Phi (\frac {N(q)}{X}).
\end{align*}
  Note that by Lemma \ref{lem:AFE} and \eqref{Vsign} that we have $L(\frac{1}{2},\chi_{(1+i)^5q}) \in \mr$.  As we also have $R^2_q \geq 0$, it follows that 
\begin{align}  
\label{maxlower}
 \max_{\substack{X< N(q) \le 2X}}\Big|L(\frac 12, \chi_{(1+i)^5q})\Big |\ge \frac{\mathcal{S}_1}{\mathcal{S}_2}. 
\end{align}

It then suffices to establish a lower bound for $\mathcal{S}_{1}$ and an upper bound for $\mathcal{S}_{2}$. We first note from Lemma \ref{lem:AFE} that
\begin{align}
\label{S1eval}
\begin{split}
	\mathcal{S}_{1}=&\sum_{q \odd}(\log N(q))L(\frac{1}{2},\chi_{(1+i)^5q}) R_q^2 \Phi (\frac {N(q)}{X}) \\
	=&2\sum_{m, n\in \mathcal{M}}\sum_{l\odd } \frac{1}{\sqrt{N(l)}}\sum_{q \odd} (\log N(q))\chi_{(1+i)^5q}(lmn) V\left(\frac{N(l)}{\sqrt{N(q)}} \right)\Phi (\frac {N(q)}{X}).   
\end{split}  
\end{align}

  We now apply Lemma \ref{lemma logd} and partial summation to see that
\begin{align*}
\begin{split}
&	\sum_{q \odd}(\log N(q))\chi_{(1+i)^5q}(lmn) V\left(\frac{N(l)}{\sqrt{N(q)}} \right) \Phi (\frac {N(q)}{X}) \\
=& \int^{2X}_X V\left(\frac{N(l)}{\sqrt{t}} \right) d \left( t\delta_{lmn=\square}{\widehat \Phi}(1)+ O\left(t^{1/2+\varepsilon}\log
  (N(lmn)+2) \right)  \right) \\
=& {\widehat \Phi}(1)X \delta_{lmn=\square}\int^{2}_1 V\left(\frac{N(l)}{\sqrt{Xt}} \right) dt +V\left(\frac{N(l)}{\sqrt{t}} \right)O\left(t^{1/2+\varepsilon}\log
  (N(lmn)+2) \right)\Big |^{2X}_X\\
& -\int^{2X}_XO\left(t^{1/2+\varepsilon}\log
 (N(lmn)+2) \right)V'\left(\frac{N(l)}{\sqrt{t}}\right ) \frac {N(l)}{2t^{3/2}}dt \\
=: & {\widehat \Phi}(1)X  \delta_{lmn=\square}\int^{2}_1 V\left(\frac{N(l)}{\sqrt{Xt}} \right) dt+R. 
\end{split}
\end{align*}

   It follows from the above and \eqref{S1eval} that 
\begin{align*}	
    \mathcal{S}_{1}={\widehat \Phi}(1)X
     \sum_{m, n \in \mathcal{M}}& \sum_{\substack{l \odd \\lmn =\square}} \frac{1}{\sqrt{N(l)}}\int_{1}^{2}V\left(\frac{N(l)}{\sqrt{Xt}} \right) dt+O\Bigg(\sum_{m, n\in \mathcal{M}} \sum_{l\odd } \frac{1}{\sqrt{N(l)}}R\Bigg).
\end{align*}
    In view of the rapid decay of $V$ and $V'$ given in \eqref{2.07}, we see that we may restrict the sum over $l$ to be $N(l) \leq X^{1/2+\varepsilon}$ for any $\varepsilon>0$ in the error term above.  Note moreover from Lemma \ref{lemma Msize} and \eqref{Ikdef} that we have for any $m \in \mathcal M$,
\begin{align*}	
    \log N(m) \ll e^{\log_2 M}\log M\log_2 M(\log M)^{1+o(1)} \ll M^{\varepsilon}.
\end{align*}    
    It follows that
\begin{align}	
\label{Rest}
    R \ll X^{1/2+\varepsilon}\log( N(lmn)+2) \ll X^{1/2+\varepsilon}M^{\varepsilon}.
\end{align}        
    
    We further set $M= X^{\frac{1}{4}-5\varepsilon}$ for some $0<\varepsilon<1/20$ and observe that $|\mathcal{M}|\leq M$ from Lemma \ref{lemma Msize}. We then deduce that
\begin{align*}	
    \sum_{m, n\in \mathcal{M}} \sum_{l\odd } \frac{1}{\sqrt{N(l)}}R \ll X^{1/2+\varepsilon}M^2\sum_{\substack{l\odd \\ N(l) \leq X^{1/2+\varepsilon}} } \frac{1}{\sqrt{N(l)}} \ll X^{3/4+\varepsilon}M^2. 
\end{align*}
   We deduce from the above that
\begin{align}	
\label{S1exp}
\begin{split}
    \mathcal{S}_{1}=& {\widehat \Phi}(1)X\sum_{m, n \in \mathcal{M}}\sum_{\substack{ l \odd \\ lmn =\square}} \frac{1}{\sqrt{N(l)}}\int_{1}^{2}V\left(\frac{N(l)}{\sqrt{Xt}} \right) dt +O\Big(X^{3/4+\varepsilon}M^2\Big).  
\end{split}
\end{align}
  
  In view of \eqref{gcdprod} and keeping in mind that $V(x) \geq 0$, we may in the main term of \eqref{S1exp} keep only those satisfying
\[
l = \frac{[m,n]_{K}}{(m,n)_{K}}.
\]
  Using the relation given in \eqref{gcdprod} again, we see that 
\begin{align*}
%%\label{Dlower1}
\begin{split}
    \mathcal{S}_{1} \geq &  {\widehat \Phi}(1) X \sum_{m,n\in \mathcal M}\frac{N((m,n)_{K}^2)}{\sqrt{N(m)N(n)}} \int_1^2V\Big(\frac{N(m)N(n)}{N((m,n)_{K}^2)\sqrt{Xt}} \Big)dt  +O\Big(X^{3/4+\varepsilon}M^2\Big) \\
    \geq &  {\widehat \Phi}(1) X \sum_{\substack{m,n\in \mathcal M \\ N(m)N(n) \leq N((m,n)_{K}^2)X^{\varepsilon}}}\frac{N((m,n)_{K}^2)}{\sqrt{N(m)N(n)}} \int_1^2V\Big(\frac{N(m)N(n)}{N((m,n)_K^2)\sqrt{Xt}} \Big)dt  +O\big(X^{3/4+\varepsilon}M^2\big). 
\end{split}
\end{align*}
   
 Note that when $N(m)N(n) \leq N((m,n)_K^2)X^{\varepsilon}$, we have
$0< \frac{N(m)N(n)}{N((m,n)_K^2)\sqrt{Xt}}<1$ for $1 \leq t \leq 2$. We now apply the estimation $V(x)=1 + O\left(x^{\frac{1}{2}-\varepsilon}\right)$ given in \eqref{2.07} to deduce from the above that
\begin{align}
\label{S1lower}
\begin{split}
\mathcal{S}_{1} 
	 \ge & {\widehat \Phi}(1) X \sum_{\substack{m,n\in \mathcal M \\ N(m)N(n) \leq N((m,n)_K^2)X^{\varepsilon}}}\frac{N((m,n)_K^2)}{\sqrt{N(m)N(n)}}\big(1+O\big((X^{-1/2+\varepsilon})^{1/2-\varepsilon}\big)\big)+O\big(X^{3/4+\varepsilon}M^2\big).
\end{split}
\end{align}

  Analogue to the arguments in \cite[p. 25]{BT19} , we have, for each fixed \(m \in \mathcal M\),
\[
\sum_{n \in \mathcal M}\Big( \frac{N(m)N(n)}{N((m,n)_K^2)} \Big)^{1/3} \le \prod_{\substack{ \varpi \odd \\ N(\varpi) \le y_{\mathcal M}}}\Big(1+\frac{2}{N(\varpi)^{1/3}-1}\Big) \ll \exp \big(y_{\mathcal M}^{2/3}\big).
\]
  We now apply Rankin's trick to see that
\begin{align*}
   & \sum_{\substack{m,n\in \mathcal M \\ N(m)N(n) \leq N((m,n)_{K}^2)X^{\varepsilon}}}\frac{N((m,n)_{K}^2)}{\sqrt{N(m)N(n)}} \\
  &= \Big(\sum_{m,n\in\mathcal M } - \sum_{\substack{m,n\in\mathcal M \\ N(m)N(n) > N((m,n)_{K}^2)X^{\varepsilon}}}\Big)\frac{N((m,n)_{K}^2)}{\sqrt{N(m)N(n)}} 
     \\
    & \ge \sum_{m,n\in\mathcal M}  \frac{N((m,n)_{K}^2)}{\sqrt{N(m)N(n)}}  -X^{-\varepsilon/6}\sum_{m,n\in\mathcal M} \Big( \frac{N(m)N(n)}{N((m,n)_K^2)} \Big)^{1/3} \\
    & \gg S(\mathcal M) -X^{-\varepsilon/6}|\mathcal M| \exp \big(y_{\mathcal M}^{2/3}\big).
\end{align*}
   Note that by Lemma \ref{lemma Msize} we have $|\mathcal M| \leq M, \ y_{\mathcal M} \le  (\log M)^{1+o(1)}$. As $M= X^{\frac{1}{4}-5\varepsilon}$, we then apply Lemma \ref{GCD} to see that
\begin{align*}
%%\label{gcdlower}
\begin{split}
   & \sum_{\substack{m,n\in \mathcal M \\ N(m)N(n) \leq N((m,n)_{K}^2)X^{\varepsilon}}}\frac{N((m,n)_{K}^2)}{\sqrt{N(m)N(n)}} \ge M  \exp\bigg(\big(2\sqrt{2}+o(1)\big)\sqrt{\frac{\log M\log_3M}{\log_2M}}\bigg).
\end{split}
\end{align*}

  Using $M= X^{\frac{1}{4}-5\varepsilon}$ again, we deduce from \eqref{S1lower} and the above that 
\begin{align}
\label{S1lower2} 
\begin{split}
\mathcal{S}_{1} \ge & \left(1+o(1)\right){\widehat \Phi}(1) X M  \exp\bigg(\big(2\sqrt{2}+o(1)\big)\sqrt{\frac{\log M\log_3M}{\log_2M}}\bigg).
\end{split}
\end{align}   

  Next, we proceed to establish an upper bound of $\mathcal{S}_2$. Again, applying Lemma \ref{lemma logd} and arguing similar to those that lead to \eqref{Rest} imply that
\begin{align*}
	\mathcal{S}_{2}=& \sum_{q \odd }(\log N(q))R_q^2\Phi (\frac {N(q)}{X})=\sum_{m, n \in \mathcal{M}}  \sum_{q \odd }(\log N(q))\chi_{(1+i)^5q}(m n)\Phi (\frac {N(q)}{X})\\
	            =& {\widehat \Phi}(1)X \sum_{\substack{m, n \in \mathcal{M} \\ m n =\square}}1
	             +O\Bigg(X^{\frac 1 2 + \varepsilon} \sum_{\substack{m,n \in \mathcal{M}}}\log
  (N(mn)+2)\Bigg)\\
	            =& {\widehat \Phi}(1)X	\sum_{\substack{m, n \in \mathcal{M} \\ m n =\square}}1 + O\left(X^{1/2 +\varepsilon} M^{2+\varepsilon} \right) \\
  =& {\widehat \Phi}(1)X	\sum_{\substack{m, n \in \mathcal{M} \\ m n =\square}}1 + O\left(X^{1 -\varepsilon}  \right), 
\end{align*}
 where the last equality above follows from the observation that $M= X^{\frac{1}{4}-5\varepsilon}$. 

  We now apply Lemma \ref{lemma Msquaresize} to deduce from the above that 
\begin{align*}            
  \mathcal{S}_{2} \le \left(1+o(1)\right){\widehat \Phi}(1)XM.
\end{align*} 
   	
  We conclude from \eqref{maxlower}, \eqref{S1lower2} and the above that for sufficiently large $X$ and arbitrary small $\varepsilon>0$, 
\begin{align*}
\max_{\substack{q \odd \text{ \rm prime }\\X< N(q) \le 2X}}\Big|L(\frac 12, \chi_{(1+i)^5q})\Big | &\geq \frac{\mathcal{S}_1}{\mathcal{S}_2}\geq \exp\left(\left(2\sqrt{2} \cdot \sqrt{\frac 14-5\varepsilon}+o(1)\right)\sqrt{\frac{\log X\log_{3} X}{\log_{2} X}}\right).
\end{align*}
 This implies the estimation given in \eqref{Llowerbound} and hence completes the proof of Theorem \ref{main theorem 1}.
   
\hspace{0.1in}

\noindent{\bf Acknowledgments.} P. G. is supported in part by NSFC grant 12471003.

\bibliography{biblio}
\bibliographystyle{amsxport}

\vspace*{.5cm}

\end{document}